\numberwithin{equation}{section}
\newtheorem{thm}{Theorem}[section]  	
\newtheorem{cor}[thm]{Corollary}
\newtheorem*{rem*}{Remark}
\theoremstyle{definition}
\newtheorem{dfn}[thm]{Definition}
\newcommand{\R}{\mathbb{R}}
\newcommand{\N}{\mathbb{N}}
\newcommand{\Aa}{{\mathcal A}}
\newcommand{\Cc}{{\mathcal C}}
\def\un{\underline}
\newcommand{\Hh}{{\mathcal H}}
\newcommand{\ba}{\begin{eqnarray}}
	\newcommand{\ea}{\end{eqnarray}}
\newcommand{\bas}{\begin{eqnarray*}}
	\newcommand{\eas}{\end{eqnarray*}}
\newcommand{\be}{\begin{equation}}
	\newcommand{\ee}{\end{equation}}
\newcommand{\conj}[1]{
	\overline{#1} }
\begin{document}
\title[Clifford Algebra-valued Segal-Bargmann and Taylor Isomorphisms]{Clifford Algebra-valued Segal-Bargmann \\Transform and Taylor Isomorphism}
\author[S. Eaknipitsari]{Sorawit Eaknipitsari} 
\address{Department of Mathematics and Computer Science\\
	Faculty of Science, Chulalongkorn University\\
	Bangkok, Thailand} 
\email{e.sorawit@gmail.com}

\author[W. Lewkeeratiyutkul]{Wicharn Lewkeeratiyutkul}
\thanks{The second author is the corresponding author.}

\address{Department of Mathematics and Computer Science\\
	Faculty of Science, Chulalongkorn University\\
	Bangkok, Thailand}
\email{wicharn.l@chula.ac.th}

\subjclass{Primary 15A66; Secondary 32A36}

\keywords{Segal-Bargmann transform, Clifford analysis, monogenic function, Fock space}

\date{\today}

\begin{abstract}
	Classical Segal-Bargmann theory studies three Hilbert space unitary isomorphisms that describe the wave-particle duality and the configuration space-phase space. In this work, we generalized these concepts to Clifford algebra-valued functions. We establish the unitary isomorphisms among the space of Clifford algebra-valued square-integrable functions on $\mathbb{R}^n$ with respect to a Gaussian measure, the space of monogenic square-integrable functions on $\mathbb{R}^{n+1}$ with respect to another Gaussian measure and the space of Clifford algebra-valued linear functionals on symmetric tensor elements of $\mathbb{R}^n$. 
\end{abstract}

	\maketitle

\section{Introduction}

For $x \in \R^n$, let $\rho(x) = (2\pi)^{-n/2} \, e^{-|x|^2/2}$. The Segal-Bargmann transform is a map $U \colon L^{2}\left(\mathbb{R}^{n},\rho\,dx\right) \rightarrow \Hh L^2(\mathbb{C}^n, \mu\, dz) $ defined by
	\begin{align*}
		Uf\left(z\right)  & =\int_{\mathbb{R}^{n}}\rho\left(z-x\right) f(x) \,dx\\
		& =\left(2\pi \right)^{-n/2}\int_{\mathbb{R}^{n}}e^{-\frac{|z-x|^{2}}{2}}f(x)  \,dx.
	\end{align*}
Here $\Hh L^2(\mathbb{C}^n, \mu \,  dz)$ is the space of holomorphic square-integrable functions on $\mathbb{C}^n$ with respect to measure $\mu(z)\,dz$ where $\mu(z) =\pi^{-n}\,e^{-|z|^{2}}$ and $dz$ is Lebesgue measure on $\mathbb{C}^n$. Segal \cite{se1}, \cite{se2} and Bargmann \cite{ba} independently proved that $U$ is a unitary isomorphism. See also \cite{ha1}, \cite{hall} for backgrounds and  recent developments.  
The map $U$ can be regarded as the heat operator $e^{\frac{\Delta}2}f = \rho \ast f$, followed by the analytic continuation from $\R^n$ to $\mathbb{C}^n,$ as in the following commutative diagram:
\begin{align}
\begin{gathered}
\xymatrix{
	&&  \Hh L^2(\mathbb{C}^n, \mu \,  dz)   \\
	L^2(\R^n,\rho\, dx)  \ar[rr]_{e^{\frac{\Delta}2}}  \ar[rru]^{U} &&
	\widetilde \Aa (\R^n)
	\ar[u]_{\, \, \Cc \, }} 
\end{gathered}
\end{align}
Here $\mathcal{C}$ denotes the analytic continuation from $\mathbb{R}^n$ to $\mathbb{C}^n$ and $\widetilde \Aa (\R^n)$  is the image of $L^2(\R^n,\rho\, dx)$  by the operator $e^{\frac{\Delta}2}$.
	
There is another space, namely the Fock space $\mathcal{F}(\mathbb{C}^n)$ of symmetric tensors over $\mathbb{C}^n$, that is isometrically isomorphic to $\Hh L^2(\mathbb{C}^n, \mu \,  dz)$. See \cite{ito}, \cite{kaku}, \cite{se3} for original works. Here, we follow recent developments in \cite{d}, \cite{gm}.  Let $X$ be the complex dual space of $\mathbb{C}^n$ and denote by ${X}^{\odot k}$ the space of symmetric $k$-tensors over $X$. Consider the algebraic direct sum $\sum_{k=0}^{\infty} {X}^{\odot k}$ whose elements are of the form
$\alpha = \sum_{k=0}^{\infty} \alpha_k$,
where $\alpha_k \in {X}^{\odot k}$ for each $k$ and $\alpha_k = 0$ for all but finitely many $k$. 
Let $\{ e_1, \dots, e_n\}$ be the standard basis for $\mathbb{C}^n$. 
Each element $\alpha_{k} \in {X}^{\odot k}$ has a natural norm given by
$$|\alpha_k|^2\:  
= \sum_{\substack{|\beta| = k}} \frac{1}{\beta!}|\alpha_k (e^{\beta})|^2,$$
where the sum is taken over multi-indices $\beta = (\beta_1,\dots, \beta_n) \in \mathbb{N}_0^n$. We use notation  $e^{\beta} = e_1^{\odot\beta_1} {\odot} \dots {\odot}e_n^{\odot\beta_n}$, $|\beta| = \beta_1 + \dots + \beta_n$ and $\beta! = \beta_1!\dots\beta_n!$.
The algebraic direct sum $\sum_{k=0}^{\infty} {X}^{\odot k}$ is equipped with a norm given by
$$\|\alpha\| = \big(\sum_{k=0}^{\infty}  |\alpha_k|^2\big)^{\frac{1}{2}}.$$	
The Fock space $\mathcal{F}(\mathbb{C}^n)$ is defined to be the Hilbert space completion of the algebraic direct sum with respect to this norm. Thus $\mathcal{F}(\mathbb{C}^n)$ is the set of strong sums $\sum_{k=0}^{\infty} \alpha_k$, where $\alpha_k \in {X}^{\odot k}$ for each $k$, such that $\sum_{k=0}^{\infty}  |\alpha_k|^2 < \infty$.

Next, we describe the unitary isomorphism from $\Hh L^2(\mathbb{C}^n, \mu\, dz)$ onto $\mathcal{F}(\mathbb{C}^n)$. 
Let $f$ be a holomorphic function on $\mathbb{C}^{n}$. There is a linear map $D^kf \colon(\mathbb{C}^n)^{\odot k} \rightarrow \mathbb{C}$ such that for any $u_1,\dots,u_k \in \mathbb{C}^n,$
$$D^kf (u_1\odot\dots\odot u_k ) =
\partial_{u_1}\dots \partial_{u_k}f(0), $$
with $D^0f = f(0).$ Here $\partial_v$ is the directional derivative in the $v$ direction. We identify $D^k f$ as an element of $(X)^{\odot k}$. 
It is natural to write $\sum_{k=0}^{\infty} D^kf$ in $\mathcal{F}(\mathbb{C}^n)$ as $$(1-D)^{-1}f = \sum_{k=0}^{\infty} D^kf.$$

The map $(1-D)^{-1}$ is a unitary isomorphism from  $\Hh L^2(\mathbb{C}^n, \mu\, dz)$ onto the Fock space $\mathcal{F}(\mathbb{C}^n).$ This map is simply the Taylor series expansion that assigns to each holomorphic function its Taylor coefficients. Hence we will call it the \emph{Taylor map}.
	
It can be summarized that the three arrows in the following commutative diagram are unitary isomorphisms. These isomorphisms are used to describe the ``wave-particle duality" in quantum field theory.
	\begin{align}\label{utriad}
	\begin{gathered}
	\xymatrix{
		L^2(\mathbb{R}^n,\rho \,dx) \ar[rr]^{U} \ar[rd]_{}	&	 	&	\Hh L^2(\mathbb{C}^n, \mu\, dz) \ar[ld]^{(1-D)^{-1}}\\
		&	\mathcal{F}(\mathbb{C}^n)	&} 
	\end{gathered}
	\end{align}

 There is another form of a Segal-Bargmann transform $V \colon L^2(\mathbb{R}^n, dx) \rightarrow \Hh L^2(\mathbb{C}^n, \nu\, dx\,dy)$ defined by
 \bas
 Vf(z) &=& \int_{\R^n} \, \rho(z - x) f(x) \, dx \\
 &=& (2\pi )^{-n/2} \, \int_{\R^n}  \, e^{-\frac{|z-x|^2}{2}}\,f(x)\, dx ,
 \eas
 where $\nu(y)\,dx\,dy ={\pi}^{-n/2} \,e^{-|y|^2}dx\,dy$.  
 The map $V$ is a unitary isomorphism from $L^2(\mathbb{R}^n, dx)$ onto $\Hh L^2(\mathbb{C}^n, \nu\, dx\,dy)$. The formula that defines $V$ is the same as that for $U$, but with different domain and range. However, one does not have a Taylor map to the Fock space in the same way as the $U$-version of a Segal-Bargmann transform. The main reason is that the monomials are orthogonal with respect to the measure $d\mu$ and not to the measure $d\nu$. The $U$-version and the $V$-version both have their advantages, but certainly the existence of this Taylor map onto the Fock space is a significant advantage of the $U$-version.


The purpose of this paper is to generalize the triad (\ref{utriad}) to the Clifford algebra-valued functions setting. 
In 1982, Brackx, Delanghe, and Sommen \cite{bds} defined a (left) monogenic function $f : \mathbb{R}^n \rightarrow \mathbb{C}_n$ as an element in the kernel of a Dirac operator, i.e., $$ 0 = {\underline{D}}\,f(\underline{x}) = \sum_{j=1}^{n} e_j\partial_{e_j}f(\underline{x}) $$
where $\mathbb{C}_n$ is the complex Clifford algebra generated by the standard basis $\{e_1,\dots,e_n\}$ of $\mathbb{R}^n$. 
Denote by $\mathcal{M}(\mathbb{R}^n)$ and $\mathcal{M}(\mathbb{C}^n)$ the right $\mathbb{C}_n$-modules of monogenic functions on $\mathbb{R}^n$ and $\mathbb{C}^n$, respectively. In 2016, Kirwin, Mour{\~a}o, Nunes, and Qian \cite{kmnq} used a notion of an $(n+1)$-variable monogenic function, namely a function $f : \mathbb{R}^{n+1} \rightarrow \mathbb{C}_n$ such that 
$$ (\partial_{e_0} + {\underline{D}}\,)f(x_0,\underline{x}) = 0.$$ They obtained a generalized Segal-Bargmann transform on special types of monogenic functions namely, slice monogenic and axial monogenic functions. In 2017, Mour{\~a}o, Nunes, and Qian \cite{mnq} continued their work and  generalized the Segal-Bargmann transform to Clifford algebra-valued functions analogous to $V$ as in the following theorem. 
	
\begin{thm}[\cite{mnq}]\label{vextend}
The map $\tilde{V} \colon L^2(\mathbb{R}^n,d\underline{x})\otimes \mathbb{C}_n \rightarrow \mathcal{M}L^2(\mathbb{R}^{n+1},\tilde{\nu}\, dx_0\,d\underline{x})$ given by
$$\tilde{V}(f)(x_0,\underline{x}) = (2\pi)^{-n}\int_{\mathbb{R}^n}\left( \int_{\mathbb{R}^n} e^{-\frac{\underline{p}^2}{2}} e^{i(\underline{p},\underline{x}-\underline{y})}e^{-ix_0\underline{p}}\,d\underline{p}\right) f(\underline{y})\,d\underline{y},$$
is a unitary isomorphism. Here $\mathcal{M}L^2(\mathbb{R}^{n+1},\tilde{\nu} \, dx_0\,d\underline{x})$ is the Hilbert space of monogenic functions on $\mathbb{R}^{n+1}$ that are square-integrable with respect to measure $\tilde{\nu} \, dx_0\,d\underline{x}$ where $\tilde{\nu}(x_0) = \frac{1}{\sqrt{\pi}}e^{-x_0^2}$.
	\end{thm}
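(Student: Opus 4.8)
The plan is to realise $\tilde V$ as a heat operator composed with a Cauchy--Kovalevskaya (monogenic) extension in the variable $x_0$, in exact analogy with the decomposition of the classical transform as $e^{\Delta/2}$ followed by analytic continuation, and then to prove isometry and surjectivity on the Fourier side. First I would take the Fourier transform in the $\underline{x}$-variables, which acts component-wise on $\mathbb{C}_n$ and, up to the usual normalisation constant, is a unitary self-map of $L^2(\mathbb{R}^n,d\underline{x})\otimes\mathbb{C}_n$. Interchanging the order of integration in the defining formula, first for $\mathbb{C}_n$-valued Schwartz functions, gives
\[
\widehat{\tilde V f}(x_0,\underline{p}) \;=\; e^{-|\underline{p}|^2/2}\, e^{-ix_0\underline{p}}\, \widehat f(\underline{p}),
\]
where the hat denotes the Fourier transform in $\underline{x}$. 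The scalar factor $e^{-|\underline{p}|^2/2}$ is the Fourier multiplier of the heat operator $e^{\Delta/2}=\rho\ast(\cdot)$, while left multiplication by the Clifford exponential $e^{-ix_0\underline{p}}$ is precisely the symbol of the operator $e^{-x_0\underline{D}}$ that sends a function on $\mathbb{R}^n$ to its Cauchy--Kovalevskaya extension, a solution of $(\partial_{e_0}+\underline{D})F=0$ on $\mathbb{R}^{n+1}$; indeed, one checks directly that for each $\underline{p}$ the function $(x_0,\underline{x})\mapsto e^{-ix_0\underline{p}}e^{i(\underline{p},\underline{x})}$ is monogenic, so $\tilde V f$ is monogenic with $\tilde V f(0,\underline{x})=(\rho\ast f)(\underline{x})$. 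Thus $\tilde V=\CK\circ e^{\Delta/2}$, where $\CK$ denotes the Cauchy--Kovalevskaya extension.

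For the isometry I would use, with $r=|\underline{p}|$ and $(i\underline{p})^2=r^2$, the identity
\[
e^{-ix_0\underline{p}} \;=\; \cosh(x_0 r) \;-\; i\,\frac{\underline{p}}{r}\,\sinh(x_0 r),
\]
whose right-hand side is fixed by the Hermitian conjugation $\dagger$ of $\mathbb{C}_n$; hence for $a\in\mathbb{C}_n$ the quantity $|e^{-ix_0\underline{p}}a|^2$ equals the scalar part of $a^{\dagger}e^{-2ix_0\underline{p}}a$, namely $\cosh(2x_0 r)\,|a|^2$ plus a real multiple of $\sinh(2x_0 r)$. Integrating against $\tilde\nu(x_0)\,dx_0$ kills the $\sinh$ term by parity (as $\tilde\nu$ is even), and $\int_{\mathbb{R}}\tilde\nu(x_0)\cosh(2x_0 r)\,dx_0=e^{r^2}$, so by Plancherel in $\underline{x}$ together with Fubini,
\[
\|\tilde V f\|_{\mathcal{M}L^2(\mathbb{R}^{n+1},\tilde\nu\, dx_0\,d\underline{x})}^2
= (2\pi)^{-n}\int_{\mathbb{R}^n} e^{-|\underline{p}|^2}\Big(\int_{\mathbb{R}}\tilde\nu(x_0)\big|e^{-ix_0\underline{p}}\widehat f(\underline{p})\big|^2\, dx_0\Big)\, d\underline{p}
= (2\pi)^{-n}\int_{\mathbb{R}^n} \big|\widehat f(\underline{p})\big|^2\, d\underline{p}
= \|f\|^2 .
\]
The constant $(2\pi)^{-n}$ in the definition of $\tilde V$ is exactly what makes the two Gaussian factors cancel against the Plancherel constant. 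A density argument extends this to all of $L^2(\mathbb{R}^n,d\underline{x})\otimes\mathbb{C}_n$, so $\tilde V$ is isometric, in particular injective with closed range.

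For surjectivity I would first observe that $(\partial_{e_0}-\underline{D})(\partial_{e_0}+\underline{D})=\partial_{e_0}^2+\Delta_{\underline{x}}$ is the Laplacian on $\mathbb{R}^{n+1}$, so every monogenic function is component-wise harmonic, hence real-analytic; together with the uniqueness in the Cauchy--Kovalevskaya theorem this makes $F\mapsto F(0,\cdot)$ injective on $\mathcal{M}L^2(\mathbb{R}^{n+1},\tilde\nu\, dx_0\,d\underline{x})$, with inverse $\CK=e^{-x_0\underline{D}}$. Repeating the norm computation for a general monogenic $F$ identifies $\mathcal{M}L^2(\mathbb{R}^{n+1},\tilde\nu\, dx_0\,d\underline{x})$, via $F\mapsto\widehat{F(0,\cdot)}$, with the weighted space $L^2(e^{|\underline{p}|^2}d\underline{p})\otimes\mathbb{C}_n$. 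On the other hand, in Fourier variables the range of $e^{\Delta/2}=\rho\ast(\cdot)$ on $L^2(\mathbb{R}^n,d\underline{x})\otimes\mathbb{C}_n$ is precisely $\{\,e^{-|\underline{p}|^2/2}\widehat f : \widehat f\in L^2\,\}=L^2(e^{|\underline{p}|^2}d\underline{p})\otimes\mathbb{C}_n$ as well. Hence, given $F$ in the target space, one solves $\rho\ast f=F(0,\cdot)$ for some $f\in L^2(\mathbb{R}^n,d\underline{x})\otimes\mathbb{C}_n$ and obtains $\tilde V f=\CK(\rho\ast f)=\CK(F(0,\cdot))=F$; thus $\tilde V$ is onto, and being isometric it is a unitary isomorphism.

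The main difficulty here is analytic rather than algebraic. One must justify that an $L^2(\tilde\nu)$-monogenic function genuinely coincides, as an element of $L^2(\mathbb{R}^{n+1},\tilde\nu\, dx_0\,d\underline{x})\otimes\mathbb{C}_n$, with the Cauchy--Kovalevskaya extension of its trace on $\{x_0=0\}$, and that the Fourier-side identity $\widehat{e^{-x_0\underline{D}}g}=e^{-ix_0\underline{p}}\widehat g$ (in the variable $\underline{p}$, for each fixed $x_0$) persists when $g$ is merely in $L^2$ with a Gaussian-type weight rather than Schwartz. This requires pointwise estimates on the monogenic extension coming from the sub-mean-value property of harmonic functions, together with the (standard) fact that $\mathcal{M}L^2(\mathbb{R}^{n+1},\tilde\nu\, dx_0\,d\underline{x})$ is a closed subspace of $L^2(\mathbb{R}^{n+1},\tilde\nu\, dx_0\,d\underline{x})\otimes\mathbb{C}_n$; once these are in place, the Plancherel--Fubini interchanges used above go through routinely.
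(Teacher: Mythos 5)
Note first that the paper does not prove Theorem \ref{vextend} at all --- it is quoted from \cite{mnq} --- but it proves the closely analogous Theorem \ref{main0} for $\tilde U$ by the same strategy you propose, so that proof is the natural benchmark. Your isometry argument is correct and is essentially the paper's computation: pass to the Fourier side to get $\widehat{\tilde Vf}(x_0,\underline{p})=e^{-|\underline{p}|^2/2}e^{-ix_0\underline{p}}\hat f(\underline{p})$, justify the interchanges on Schwartz functions, and integrate out $x_0$ using $(i\underline{p})^2=|\underline{p}|^2$ and $\overline{i\underline{p}}=i\underline{p}$. Your $\cosh/\sinh$ decomposition with the parity argument is a slightly cleaner packaging of the paper's step where it evaluates $\int_{\mathbb{R}}e^{-ix_0(\underline{p}+\underline{q})}e^{-x_0^2}\,dx_0=\sqrt{\pi}\,e^{|\underline{p}+\underline{q}|^2/4}$ and then cancels exponents; the two are the same calculation.

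The genuine gap is in surjectivity. You propose to identify $\mathcal{M}L^2(\mathbb{R}^{n+1},\tilde\nu\,dx_0\,d\underline{x})$ with $L^2(e^{|\underline{p}|^2}d\underline{p})\otimes\mathbb{C}_n$ via $F\mapsto\widehat{F(0,\cdot)}$ by ``repeating the norm computation for a general monogenic $F$,'' but that computation presupposes exactly what must be proved: that an arbitrary $F$ in the target space coincides, in the weighted $L^2$ sense, with the Cauchy--Kowalevski extension of its trace, and that $\hat F(x_0,\cdot)=e^{-ix_0\underline{p}}\,\widehat{F(0,\cdot)}$ for a trace whose Fourier transform exists and lies in $L^2(e^{|\underline{p}|^2}d\underline{p})$. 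Membership of $F$ in $L^2(\tilde\nu\,dx_0\,d\underline{x})$ gives neither $F(0,\cdot)\in L^2(\mathbb{R}^n,d\underline{x})$ nor tempered growth of the (merely real-analytic) trace, so $\widehat{F(0,\cdot)}$ is not yet defined in any usable sense; you acknowledge this (``pointwise estimates\dots sub-mean-value property'') but do not supply it, and without it the identification of the image is circular. The paper's surjectivity argument for $\tilde U$ avoids the Fourier transform of the trace entirely: it expands the real-analytic trace $G(0,\cdot)$ in its Taylor series, matches the monomials $\underline{x}^\beta$ with $e^{\Delta/2}H_\beta$ for Hermite polynomials $H_\beta$, obtains a candidate preimage $f=\sum\alpha_{\beta,A}H_\beta e_A$ in the domain, and concludes $\tilde U f=G$ from uniqueness of the Cauchy--Kowalevski extension (Theorem \ref{2.3}). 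To repair your proof you should either carry out the Gaussian pointwise bounds on $L^2(\tilde\nu)$-monogenic functions that legitimize your Plancherel--Fubini identification, or substitute an expansion of the trace in Hermite functions (the $L^2(d\underline{x})$ analogue of the paper's Hermite-polynomial step).
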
   
In 2018, Dang, Mour{\~a}o, Nunes, and Qian \cite{dmnq} also obtained this result for spherical domains. 
Using the idea of Theorem \ref{vextend} in \cite{mnq}, we can extend the unitary map $U$ in the classical setting as follows.
	
\begin{thm} \label{main0}
The map $\tilde{U}$  given by 
\[
\tilde{U}(f)(x_0,\underline{x}) = (2\pi)^{-n}\int_{\mathbb{R}^n}\left( \int_{\mathbb{R}^n} e^{-\frac{\underline{p}^2}{2}} e^{i(\underline{p},\underline{x}-\underline{y})}e^{-ix_0\underline{p}}\,d\underline{p}\right) f(\underline{y})\,d\underline{y},
\]		
is a unitary isomorphism from $L^2(\mathbb{R}^n,\rho \,d\underline{x})\otimes \mathbb{C}_n$ onto  $\mathcal{M}L^2(\mathbb{R}^{n+1},d\tilde{\mu})$, the Hilbert space of monogenic functions on $\mathbb{R}^{n+1}$ that are square-integrable with respect to the measure $$ d\tilde{\mu} = \frac{1}{\pi^{(n+1)/2}}e^{-x_0^2-|\underline{x}|^2}\,dx_0\,d\underline{x}.$$
\end{thm}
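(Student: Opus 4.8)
The plan is to show that $\tilde U$ is well-defined, isometric, and surjective by relating it to the already-established map $\tilde V$ of Theorem~\ref{vextend}. Observe first that the integral kernel defining $\tilde U$ is \emph{identical} to the one defining $\tilde V$; only the measures on the domain and codomain differ. So my strategy is to factor $\tilde U$ through $\tilde V$ by an explicit multiplication (Gaussian-weight) operator on each side, exactly as one passes between the $U$-version and the $V$-version in the classical scalar case. Concretely, I would introduce the unitary ``dressing'' map $M_\rho \colon L^2(\mathbb{R}^n, d\underline x)\otimes\mathbb{C}_n \to L^2(\mathbb{R}^n,\rho\,d\underline x)\otimes\mathbb{C}_n$ given by $M_\rho g = \rho^{-1/2} g$ (so that $\|\rho^{-1/2}g\|_{\rho} = \|g\|$), and similarly analyze how the codomain measures $\tilde\nu\,dx_0\,d\underline x$ and $d\tilde\mu = \pi^{-(n+1)/2}e^{-x_0^2-|\underline x|^2}\,dx_0\,d\underline x$ are related: $d\tilde\mu = \tilde\nu(x_0)\cdot \pi^{-n/2}e^{-|\underline x|^2}\,dx_0\,d\underline x$. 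The extra factor $\pi^{-n/2}e^{-|\underline x|^2}$ in the codomain is precisely what must be matched against the change $d\underline x \to \rho\,d\underline x = (2\pi)^{-n/2}e^{-|\underline x|^2/2}\,d\underline x$ in the domain, after accounting for how the Segal-Bargmann kernel transports a Gaussian.

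The key steps, in order, are as follows. First, I would verify that $\tilde U(f)$ is indeed monogenic in the $(n+1)$ variables $(x_0,\underline x)$: this is immediate because $(\partial_{e_0}+\underline D)$ applied under the integral sign kills $e^{i(\underline p,\underline x-\underline y)}e^{-ix_0\underline p}$ for each fixed $\underline p$, since $(\partial_{e_0}+\underline D)e^{i(\underline p,\underline x)}e^{-ix_0\underline p} = \big(-i\underline p + i\sum_j e_j p_j\big)e^{\cdots}$ and $\sum_j e_j p_j = \underline p$ as a Clifford vector — this is exactly the computation already used in \cite{mnq}. Second, I would establish the isometry. The cleanest route is to use the Clifford-valued Plancherel theorem: expand $\tilde U f$ via the Fourier transform $\hat f$ of $f$, obtaining $\tilde U f(x_0,\underline x) = (2\pi)^{-n/2}\int_{\mathbb{R}^n} e^{-\underline p^2/2}e^{-ix_0\underline p}e^{i(\underline p,\underline x)}\hat f(\underline p)\,d\underline p$ (up to normalization), then compute $\|\tilde U f\|^2_{d\tilde\mu}$ by integrating first in $\underline x$ against $\pi^{-n/2}e^{-|\underline x|^2}\,d\underline x$ (a Gaussian integral producing a Gaussian in $\underline p - \underline p'$), then in $x_0$ against $\tilde\nu$, and finally collapsing the $\underline p,\underline p'$ double integral to a single one; the surviving Gaussian weight in $\underline p$ should combine with $|e^{-\underline p^2/2}|^2$ to reproduce exactly $\int |\hat f(\underline p)|^2 \rho\text{-type weight}\,d\underline p = \|f\|^2_{\rho\,d\underline x}$. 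Third, for surjectivity I would argue that the range is closed (being the isometric image of a Hilbert space) and dense: every monogenic $L^2(d\tilde\mu)$ function admits a decomposition into the Clifford-monogenic analogues of the Hermite/CK-extension basis, and each basis element is manifestly in the range of $\tilde U$ by inverting the kernel on the corresponding function in the domain; alternatively, one shows the orthogonal complement of the range is $\{0\}$ using monogenicity and the Fourier representation.

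The main obstacle I anticipate is the isometry computation, specifically the non-commutativity of $\mathbb{C}_n$ and the fact that $\underline p$ appears both as a scalar-like object in $e^{i(\underline p,\underline x-\underline y)}$ and as a genuine Clifford vector in $e^{-ix_0\underline p}$ (where $\underline p^2 = -|\underline p|^2$). One must be careful that $e^{-ix_0\underline p}$ is a Clifford-valued function of $x_0$ with $|e^{-ix_0\underline p}|^2$ not simply $1$ but rather $\cos^2(x_0|\underline p|)+\sin^2(x_0|\underline p|)\cdot(\text{unit bivector norm})$, and that in the inner product $\langle \tilde U f, \tilde U f\rangle$ one has cross terms $e^{ix_0\underline p'}e^{-ix_0\underline p}$ that do not collapse as nicely as in the scalar case. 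Here I would lean on the Clifford conjugation conventions and the trace/scalar-part extraction used to define the $\mathbb{C}_n$-valued inner product, together with the Gaussian integral $\int_{\mathbb{R}}\tilde\nu(x_0)e^{ix_0 t}\,dx_0 = e^{-t^2/4}$ applied with $t$ replaced by the appropriate Clifford combination — the same device that makes Theorem~\ref{vextend} work. Once the $x_0$-integration is handled this way, the remaining $\underline x$- and $\underline p$-integrations are scalar Gaussian computations, and matching the resulting weight against $\rho$ is routine bookkeeping of the constants $(2\pi)^{-n}$, $\pi^{-(n+1)/2}$, and the various $1/2$'s in the exponents.
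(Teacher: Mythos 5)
Your steps as actually executed coincide with the paper's proof: monogenicity by applying $\partial_{e_0}+\underline{D}$ under the integral sign; isometry by writing $\tilde U f$ in terms of $\hat f$ on the dense subspace $\mathcal S(\mathbb R^n)\otimes\mathbb C_n$ and performing the Gaussian integrals first in $\underline x$ (giving $e^{-|\underline q-\underline p|^2/4}$) and then in $x_0$ with the Clifford vector $\underline p+\underline q$ in place of the scalar $t$ (so $e^{-t^2/4}$ becomes $e^{+|\underline p+\underline q|^2/4}$, and the parallelogram law closes the computation against $\langle f,h\rangle$); and surjectivity via the basis $\tilde U(H_\beta)=e^{-x_0\underline D}\,\underline x^{\beta}$, the Taylor expansion of $G(0,\underline x)$, and uniqueness of the Cauchy--Kowalevski extension. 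The anticipated obstacle you identify (the $e^{-ix_0\underline p}$ factor and the scalar-part extraction) is exactly the crux of the paper's computation, and your proposed resolution is the paper's. One caveat: the strategy you announce at the outset --- factoring $\tilde U$ through $\tilde V$ by Gaussian multiplication operators on each side, ``exactly as in the classical scalar case'' --- would not go through here. In the holomorphic setting the two codomains are related by multiplication by a \emph{holomorphic} Gaussian, which preserves holomorphy; in the Clifford setting multiplication by a non-constant weight (even a scalar one) destroys monogenicity, so there is no multiplication operator carrying $\mathcal M L^2(\mathbb R^{n+1},\tilde\nu\,dx_0\,d\underline x)$ onto $\mathcal M L^2(\mathbb R^{n+1},d\tilde\mu)$ that intertwines $\tilde V$ with $\tilde U$. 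Fortunately your second and third paragraphs never rely on this factorization, so the proposal stands once that opening remark is dropped. Finally, in the surjectivity step both you and the paper implicitly use that the Taylor coefficients of $G(0,\underline x)$ produce an $L^2$-convergent Hermite series; that convergence deserves a word of justification in a full write-up.
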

The map $\tilde{U}$ can be factorized as in the following diagram:
\begin{align}
\begin{gathered}
\xymatrix{
		&&  \mathcal{M}L^2 (\R^{n+1}, \,d\tilde{\mu} )  \\
		L^2(\R^n,\rho \, d \un x)\otimes \mathbb{C}_{n}  \ar@{->}[rr]_{e^{\frac{\Delta}2}}  \ar[rru]^{\tilde{U}} && \widetilde \Aa (\R^n)\otimes 
		\mathbb{C}_{n}
		\ar[u]_{\, e^{-x_0 \un D}}
	}
\end{gathered}
\end{align}
Here we replace the analytic continuation $\mathcal{C}$ by the Cauchy-Kowalevski extension $e^{-x_0\underline{D}}$, which will be explained in Section 3.
	
Now we turn to the Clifford algebra-valued Fock space. Let $X$ be the real dual space of $\R^n$.
We can repeat the construction in the classical case for the Clifford algebra-valued symmetric tensor algebra, which will be identified with $\mathcal{F}(X)\otimes \mathbb{C}_n$ and called the \emph{$\mathbb{C}_n$-valued covariant Fock space}. An element in $\mathcal{F}(X)\otimes \mathbb{C}_n$ is a strong sum $\sum_{k=0}^{\infty} \alpha_k$, where each $\alpha_{k} \in {X}^{\odot k} \otimes \mathbb{C}_n$ and such that 
\[
\|\alpha\|^2 = \sum_{k=0}^{\infty}  |\alpha_k|^2 < \infty.
\]	

Let $f \colon \mathbb{R}^{n+1} \rightarrow \mathbb{C}_n$ be a monogenic function. Then there is a linear map $D^kf \colon(\mathbb{R}^n)^{\odot k} \rightarrow \mathbb{C}_n$ such that for any $u_1,\dots,u_k \in \mathbb{R}^n,$
$$D^kf (u_1\odot\dots\odot u_k ) =
\partial_{u_1}\dots \partial_{u_k}f(0, \underline{0}), $$
with $D^0f = f(0,\underline{0}).$ It is natural to write $\sum_{k=0}^{\infty} D^kf \in \mathcal{F}(X)\otimes\mathbb{C}_n$ as 
\[
   (1-D)^{-1}f = \sum_{k=0}^{\infty} D^kf,
\]
where
\[ 
\|(1-D)^{-1} f\|^2 =  \sum_{k=0}^{\infty}  |D^kf|^2 < \infty.
\]

Here is the second main theorem in this paper. 

\begin{thm} \label{main02}
 The map $(1-D)^{-1}$ is a unitary isomorphism from the space of square-integrable monogenic functions   $\mathcal{M}L^2(\mathbb{R}^{n+1},\,d\tilde{\mu})$ onto the Clifford algebra-valued Fock space $\mathcal{F}(X)\otimes \mathbb{C}_n.$ 
\end{thm}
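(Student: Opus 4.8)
\smallskip
\noindent\textbf{Proof plan.}
The plan is to produce a concrete orthogonal system in $\mathcal{M}L^2(\R^{n+1},d\tilde\mu)$ consisting of monogenic polynomials, transport its orthogonality and completeness from the classical Hermite system through the isomorphism $\tilde U$ of Theorem~\ref{main0}, and then evaluate the Taylor map on it. For a multi-index $\beta\in\mathbb{N}_0^{n}$ set $P_\beta=\CK[\un x^{\,\beta}]=e^{-x_0\un D}[\un x^{\,\beta}]$, the Cauchy--Kowalevski extension of the monomial $\un x^{\,\beta}$; these are the Fueter polynomials up to normalisation, and they span the right $\C_n$-module $\mathcal{P}$ of monogenic polynomials on $\R^{n+1}$. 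The key observation is that $P_\beta=\tilde U(h_\beta)$, where $h_\beta$ is the (multivariate, probabilists') Hermite polynomial characterised by $e^{\Delta/2}h_\beta=\un x^{\,\beta}$ and satisfying $\langle h_\beta,h_\gamma\rangle_{L^2(\R^n,\rho\,d\un x)}=\delta_{\beta\gamma}\,\beta!$: since $\tilde U=e^{-x_0\un D}\circ e^{\Delta/2}$ we get $\tilde U(h_\beta)=e^{-x_0\un D}\bigl[e^{\Delta/2}h_\beta\bigr]=\CK[\un x^{\,\beta}]=P_\beta$.

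As $\tilde U$ is unitary it preserves the $\C_n$-valued inner product, so the Hermite orthogonality transports to $\langle P_\beta,P_\gamma\rangle_{\mathcal{M}L^2(d\tilde\mu)}=\delta_{\beta\gamma}\,\beta!$; and since the Hermite polynomials are complete in $L^2(\R^n,\rho\,d\un x)$ while $\tilde U$ carries polynomials onto $\mathcal{P}$, the module $\mathcal{P}$ is dense in $\mathcal{M}L^2(d\tilde\mu)$. On the Fock side, a Cauchy--Kowalevski extension restricts to the original function on $\{x_0=0\}$, so $P_\beta(0,\un x)=\un x^{\,\beta}$ and hence $D^{k}P_\beta(e^{\gamma})=\partial_{e_1}^{\gamma_1}\cdots\partial_{e_n}^{\gamma_n}(\un x^{\,\beta})\big|_{\un x=\un 0}=\beta!\,\delta_{\beta\gamma}$ when $k=|\beta|$ (and $D^{k}P_\beta=0$ otherwise), so $(1-D)^{-1}P_\beta=\beta!\,\varepsilon_\beta$, where $\varepsilon_\beta\in X^{\odot|\beta|}$ is dual to $e^{\beta}$. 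A direct computation then gives $\|(1-D)^{-1}P_\beta\|^{2}=\tfrac1{\beta!}(\beta!)^{2}=\beta!$ and, more generally, $\langle(1-D)^{-1}P_\beta,(1-D)^{-1}P_\gamma\rangle_{\mathcal{F}(X)\otimes\C_n}=\delta_{\beta\gamma}\,\beta!$. Thus, on the dense module $\mathcal{P}$, $(1-D)^{-1}$ sends the orthogonal family $\{P_\beta\}$ with $\|P_\beta\|^{2}=\beta!$ to the orthogonal family $\{\beta!\,\varepsilon_\beta\}$ with the same norms, so it is a $\C_n$-module isometry there; and since the $\varepsilon_\beta$ with $\C_n$-coefficients span the algebraic direct sum $\sum_{k\ge 0}X^{\odot k}\otimes\C_n$, which is dense in $\mathcal{F}(X)\otimes\C_n$, this isometry extends uniquely to a unitary isomorphism $\Lambda\colon\mathcal{M}L^2(d\tilde\mu)\to\mathcal{F}(X)\otimes\C_n$.

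The remaining, and main, point is to verify that this continuous extension $\Lambda$ really is the Taylor map $(1-D)^{-1}$ on all of $\mathcal{M}L^2(d\tilde\mu)$ — equivalently, that $\sum_k|D^kf|^2=\|f\|^2<\infty$ for every $f$, not merely for polynomials. I would obtain this from an interior estimate: each component of a monogenic function on $\R^{n+1}$ is harmonic, so by the Cauchy integral formula and interior gradient estimates for harmonic functions, together with the fact that the Gaussian density of $d\tilde\mu$ is bounded below on compact sets, the functionals $f\mapsto\partial_{e_1}^{\beta_1}\cdots\partial_{e_n}^{\beta_n}f(0,\un 0)$ are bounded on $\mathcal{M}L^2(d\tilde\mu)$; that is, $\mathcal{M}L^2(d\tilde\mu)$ is a reproducing-kernel Hilbert $\C_n$-module. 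Granting this, if $f_j\in\mathcal{P}$ with $f_j\to f$ in $\mathcal{M}L^2(d\tilde\mu)$, then $\Lambda f_j=(1-D)^{-1}f_j\to\Lambda f$ in $\mathcal{F}(X)\otimes\C_n$ forces each graded piece, hence each pairing against $e^{\beta}$, to converge; comparing with $\partial_{e_1}^{\beta_1}\cdots\partial_{e_n}^{\beta_n}f_j(0,\un 0)\to\partial_{e_1}^{\beta_1}\cdots\partial_{e_n}^{\beta_n}f(0,\un 0)$ shows that the $|\beta|$-th graded piece of $\Lambda f$ equals $D^{|\beta|}f$. Hence $\Lambda f=\sum_k D^kf=(1-D)^{-1}f$ for every $f$, and $\|(1-D)^{-1}f\|^2_{\mathcal{F}(X)\otimes\C_n}=\|f\|^2$, which proves the theorem. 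An alternative to the interior-estimate step is to factor $(1-D)^{-1}=T\circ e^{\Delta/2}\circ\tilde U^{-1}$, where $T$ takes the Taylor coefficients at the origin of the $x_0=0$ restriction, and to deduce the unitarity of $T\circ e^{\Delta/2}$ from the classical triad~(\ref{utriad}) by tensoring with $\C_n$; the care then shifts to the bookkeeping of the $\C_n$-valued inner products and of the real-versus-complex dual space $X$.
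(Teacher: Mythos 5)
Your proposal is correct and rests on the same two pillars as the paper's proof: the orthogonal basis $\{P_\beta\}$ with $P_\beta=\tilde U(H_\beta)=e^{-x_0\un D}\,\un x^{\,\beta}$ and $\|P_\beta\|^2=\beta!$ (the paper's Corollary~\ref{onb}), and the duality relation $\partial^{\gamma}P_\beta(0,\un 0)=\beta!\,\delta_{\beta\gamma}$. The organization differs in two respects. First, for the isometry the paper works directly with an arbitrary $F=\sum_\beta\omega_\beta P_\beta$ and computes $\sum_k|D^kF(0,\un 0)|^2=\sum_\beta\beta!|\omega_\beta|^2=\|F\|^2$, justifying the termwise differentiation only by the remark that the expansion ``also converges uniformly on compact sets by monogenicity of $F$''; you instead prove the isometry on the dense module of monogenic polynomials, extend continuously, and then identify the extension with the Taylor map by showing that the functionals $f\mapsto\partial^{\beta}f(0,\un 0)$ are bounded on $\mathcal{M}L^2(\R^{n+1},d\tilde\mu)$ via harmonicity of the components and interior estimates. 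This is exactly the analytic fact the paper's one-line remark is standing in for, so your version makes explicit a step the paper glosses over. Second, for surjectivity the paper is constructive: given $\alpha=\sum_k\alpha_k$ it builds $f(\un x)=\sum_k\alpha_k(\exp_k(\un x))$ with $\exp_k(\un x)=\sum_{|\beta|=k}\frac{\un x^{\beta}}{\beta!}e^{\beta}$, takes the C-K extension $F$, checks $\|F\|^2=\|\alpha\|^2<\infty$, and verifies $(1-D)^{-1}F=\alpha$; you obtain surjectivity abstractly from the density of the span of the $\varepsilon_\beta$ with $\C_n$-coefficients in $\mathcal{F}(X)\otimes\C_n$ together with the unitary extension. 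Both routes are valid; the paper's explicit preimage is more informative (it exhibits the inverse map as a C-K extension of a generating-function series), while yours is shorter once the reproducing-kernel-type estimate is in place.
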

	
Combining Theorems \ref{main0} and \ref{main02} together, we have the following unitary isomorphisms in the Clifford-algebra-valued setting in this diagram: 	

\begin{align} 
\begin{gathered}
\xymatrix{
	L^2(\mathbb{R}^n,\rho \,d\underline{x})\otimes \mathbb{C}_n \ar[rr]^{\tilde{U}} \ar[rd]_{}	&	 	&	\mathcal{M}L^2(\mathbb{R}^{n+1},\,d\tilde{\mu}) \ar[ld]^{(1-D)^{-1}}\\
	&	\mathcal{F}(X)\otimes \mathbb{C}_n	&} \notag
\end{gathered}
\end{align}

The paper is organized as follows. In Section 2, we recall necessary facts about Clifford algebra and Clifford analysis used in this paper. In Section 3, we discuss the Clifford algebra-valued Segal-Bargmann Transform and prove Theorem \ref{main0}. In Section 4, we discuss the Clifford algebra-valued Fock space in detail and prove Theorem \ref{main02}.

\section{Preliminaries}
	
\subsection{Real and Complex Clifford algebras}

Let $\mathbb{K} = \mathbb{R}$ or $\mathbb{C}$. Define the Clifford algebra $\mathbb{K}_n$ as the $\mathbb{K}-$algebra generated by $n$ elements $e_1,\dots,e_n$, which can be identified with the canonical basis of $\mathbb{K}^n \subset \mathbb{K}_n$ and satisfy the relations $e_ie_j + e_je_i = -2\delta_{ij}$, see e.g.\cite{bds},  \cite{bss}, \cite{sg}, \cite{mnq}. If $\mathbb{K} = \mathbb{R}$ we call $\mathbb{K}_n$ the real Clifford algebra and if $\mathbb{K} = \mathbb{C}$ we call $\mathbb{K}_n$ the complex Clifford algebra. 

Note that $\{e_A ~|~ A \subset \{1,2,\dots,n\} = N \}$ is a basis for $\mathbb{K}_n$ where  $e_A = {e}_{i_1}{e}_{i_2}\cdots {e}_{i_k}$ with $A =\{i_1,i_2,\dots,i_k\}$, $1\leq i_1< i_2<\cdots<i_k \leq n$, and ${e}_{\varnothing} =1$.
Thus any $\lambda \in \mathbb{K}_n$ can be written as 
$$\lambda = \sum_{A\subset N} \lambda_A\,e_A,$$ 
where $\lambda_A \in \mathbb{K}$.
Define the so-called \textbf{$k$-vector part} of $\lambda$, for $k = 0,1,\dots,n$, by 
$$[\lambda]_k = \sum_{|A| = k} \lambda_A\,e_A.$$
Now, we focus at $\mathbb{C}_n$.
One important operator of $\mathbb{C}_n$, the \textbf{Hermitian conjugation}, is defined by 
\begin{align*}
		&\conj{e_i} = -e_i, \quad i=1,2,\dots,n,\\
		&\conj{(\lambda_A e_A)} =\lambda_A^c\, \conj{e_A}, \quad \lambda_A\in \mathbb{C}, A \subset N, \\
		&\conj{(\lambda\mu)} = \conj{\mu}\, \conj{\lambda}, \quad \lambda, \mu \in \mathbb{C}_n,
\end{align*}
where $\lambda_A^c$ denotes the complex conjugate of the complex number $\lambda_A$.
This contributes to a Hermitian inner product and its associated norm on $\mathbb{C}_n$, respectively defined by 
$$ (\lambda,\mu) = [\conj{\lambda}\mu]_0 \quad \text{and} \quad |\lambda|^2 = [\conj{\lambda}\lambda]_0 = \sum_A|\lambda_A|^2.$$

\subsection{Clifford analysis}
Clifford analysis is a function theory in higher dimensions generalizing complex analysis, see e.g. \cite{bds}. We begin by considering the generalized Cauchy-Riemann operator
$$ \partial_{e_{0}} + \underline{D},$$ 
where $$\underline{D} = \sum_{j=1}^{n} e_j\partial_{e_j}\,$$
and $\{e_0,e_1,\dots,e_n\}$ is the standard basis of $\mathbb{R}^{n+1}$.
To make things easier, we also identify $\mathbb{R}^n$ with the subspace of $\mathbb{R}_n$ of 1-vectors 
\[ 
	\{\underline{x} = \sum_{j=1}^{n} x_je_j : x = (x_1,\dots,x_n) \in \mathbb{R}^n\}. 
\] 

Now we give a generalized concept of a holomorphic function. A continuously differentiable function $f$ on an open domain $\mathcal{O} \subset \mathbb{R}^{n+1}$, taking values in $\mathbb{C}_n$, is called  (left) \textbf{monogenic} on $\mathcal{O}$ if it satisfies the generalized Cauchy-Riemann equation: 
$$ (\partial_{e_{0}} + \underline{D})f(x_0,\underline{x}) = 0.$$ 

\begin{thm}[\cite{bds}]\label{monogenicexist}
	Let $f$ be a $\mathbb{C}_n$-valued analytic function on $\R^n$. Then there exists a unique $\mathbb{C}_n$-valued monogenic function $F$ on $\R^{n+1}$ such that $F(0,\underline{x}) = f(\underline{x})$. 
\end{thm}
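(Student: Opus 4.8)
The plan is to construct $F$ by the \emph{Cauchy-Kowalevski extension}: one writes the formal power series in $x_0$,
\[
F(x_0,\underline{x}) = e^{-x_0\underline{D}}f(\underline{x}) = \sum_{k=0}^{\infty}\frac{(-x_0)^k}{k!}\,\underline{D}^k f(\underline{x}),
\]
which is forced by repeatedly differentiating the equation $\partial_{e_0}F=-\underline{D}F$ in $x_0$ and evaluating at $x_0=0$. First I would establish convergence of this series on a neighbourhood of $\R^n\times\{0\}$. Since $f$ is real-analytic, near any point of $\R^n$ it admits Cauchy-type bounds $|\partial^\alpha f(\underline{x})|\le C\,|\alpha|!\,R^{-|\alpha|}$ on a suitable polydisk; expanding $\underline{D}^k$ as a sum of $n^k$ constant-coefficient operators of order $k$ whose Clifford coefficients have modulus $1$ gives $|\underline{D}^k f(\underline{x})|\le C\,n^k\,k!\,R^{-k}$, so the series for $F$ is dominated by a geometric series in $n|x_0|/R$ and converges absolutely and locally uniformly, together with all its termwise-differentiated series; hence $F$ is $C^\infty$ (indeed real-analytic) near $\R^n\times\{0\}$ and may legitimately be differentiated term by term. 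Alternatively one can invoke the classical Cauchy-Kowalevski theorem applied to the first-order system $\partial_{e_0}F=-\underline{D}F$ with analytic Cauchy data $f$.

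Next I would verify that the $F$ so defined is monogenic and has the right boundary value. Differentiating the series term by term,
\[
\partial_{e_0}F(x_0,\underline{x}) = \sum_{k=1}^{\infty}\frac{(-x_0)^{k-1}}{(k-1)!}\,(-1)\,\underline{D}^k f(\underline{x}) = -\underline{D}\sum_{k=0}^{\infty}\frac{(-x_0)^k}{k!}\,\underline{D}^k f(\underline{x}) = -\underline{D}F(x_0,\underline{x}),
\]
so $(\partial_{e_0}+\underline{D})F=0$, and $F(0,\underline{x})=f(\underline{x})$ is immediate since only the $k=0$ term survives at $x_0=0$.

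For uniqueness, suppose $F_1,F_2$ are monogenic with $F_1(0,\cdot)=F_2(0,\cdot)$ and put $G=F_1-F_2$. Applying $\partial_{e_0}-\underline{D}$ to $(\partial_{e_0}+\underline{D})G=0$ and using $\underline{D}^2=-\Delta$ gives $(\partial_{e_0}^2+\Delta)G=0$, so each component of $G$ is harmonic on $\R^{n+1}$, hence real-analytic. The relation $\partial_{e_0}G=-\underline{D}G$ yields inductively $\partial_{e_0}^k G(0,\underline{x})=(-\underline{D})^k G(0,\underline{x})$; since $G(0,\cdot)\equiv0$, every purely tangential derivative of $G$ vanishes on $\R^n\times\{0\}$, and then $\partial_{e_0}^k\partial_{\underline{x}}^\beta G = \partial_{\underline{x}}^\beta(-\underline{D})^k G$ shows that \emph{all} partial derivatives of $G$ vanish there. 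Real-analyticity then forces $G\equiv0$ on a neighbourhood of $\R^n\times\{0\}$, and the identity theorem for real-analytic functions on the connected set $\R^{n+1}$ propagates this to $G\equiv0$; thus $F_1=F_2$.

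I expect the main obstacle to be the convergence estimate for the Cauchy-Kowalevski series, i.e.\ obtaining a clean majorant bound on the higher-order derivatives $\underline{D}^k f$ of the analytic function $f$ (and thereby describing the domain on which $F$ is well-defined); the monogenicity check and the uniqueness argument are essentially formal, resting only on term-by-term differentiation and on the real-analyticity of monogenic (equivalently, componentwise harmonic) functions.
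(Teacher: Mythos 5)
The paper offers no proof of this statement: it is imported from the reference [bds], and the series $F=e^{-x_0\underline{D}}f=\sum_k\frac{(-x_0)^k}{k!}\underline{D}^kf$ that your construction starts from is exactly what the paper records separately (again without proof) as Theorem \ref{2.3}. Judged on its own merits, your argument is the standard one and is essentially sound: the termwise verification of $(\partial_{e_0}+\underline{D})F=0$, and the uniqueness argument via $(\partial_{e_0}-\underline{D})(\partial_{e_0}+\underline{D})=\partial_{e_0}^2+\Delta$, componentwise harmonicity, vanishing of all derivatives on the hyperplane, and the identity theorem, are all correct. (One small point worth making explicit: the estimate $|\underline{D}^kf|\le n^k\max_{|\alpha|=k}|\partial^\alpha f|$ uses that left multiplication by a basis element $e_{j_1}\cdots e_{j_k}$ is an isometry for the norm $|\lambda|^2=\sum_A|\lambda_A|^2$, which holds because $e_Ae_B=\pm e_{A\triangle B}$.)

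The one genuine gap is the domain of $F$. Your Cauchy estimates yield convergence of the majorant $\sum_kC(n|x_0|/R)^k$ only for $|x_0|<R/n$, where $R$ is the local radius of analyticity of $f$; this produces a monogenic $F$ on an open neighbourhood of $\{0\}\times\R^n$ of variable thickness, not on all of $\R^{n+1}$ as the statement demands, and no argument can close this gap for a general real-analytic $f$. Indeed, for $n=1$ the generalized Cauchy--Riemann system identifies the C-K extension with the holomorphic extension of $f$ in the variable $x_0+ix_1$, so $f(x_1)=1/(1+x_1^2)$ has C-K extension $1/(1-(x_0+ix_1)^2)$ up to the identification $\mathbb{C}_1\cong\C\oplus\C$, which blows up at $(x_0,x_1)=(\pm1,0)$. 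What [bds] actually prove (and what your proof establishes) is the neighbourhood version; the global version printed here requires $f$ to satisfy entire-type bounds $|\partial^\alpha f|\le C_K\,\alpha!\,R^{-|\alpha|}$ on compacts with $R$ arbitrarily large, equivalently that $f$ extend to an entire function on $\C^n$. That hypothesis does hold for every function to which the paper applies the theorem (the heat-smoothed functions $\rho\ast g$ and the monomials $\underline{x}^\beta$), and under it your geometric-series majorant converges for every $x_0$, so your proof then goes through on all of $\R^{n+1}$ verbatim. You should either restrict the conclusion to a neighbourhood or add this hypothesis; as written the proof does not (and cannot) deliver the literal statement.
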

This extension is called the Cauchy-Kowalevski extension, or simply the C-K extension, of $f$. In \cite{sf} and \cite{dss}, the formula for the C-K extension is given as follows:

\begin{thm} \label{2.3}
Let $f$ be a $\mathbb{C}_n$-valued analytic function on $\R^n$.
Then the C-K extension of $f$ is given by the formula
$$F(x_0,\underline{x})= e^{-x_0\underline{D}}f(\underline{x}) := \left( \sum_{k=0}^{\infty} (-1)^k \dfrac{x_0^k}{k!}\underline{D}^kf\right) (\underline{x}) $$
	where the series converges uniformly on compact subsets.
\end{thm}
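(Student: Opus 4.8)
The plan is to verify directly that the formally-defined function $F(x_0,\underline{x}) = e^{-x_0\underline{D}}f(\underline{x}) = \sum_{k=0}^\infty (-1)^k \frac{x_0^k}{k!}\underline{D}^k f(\underline{x})$ has all three required properties: it converges (uniformly on compact sets), it restricts to $f$ on the hyperplane $x_0 = 0$, and it is monogenic. Uniqueness is already granted by Theorem \ref{monogenicexist}, so once existence with these properties is established, the identification of $F$ with the C-K extension is automatic.

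First I would address convergence. Since $f$ is (real-)analytic on $\R^n$, on any compact set $K$ one has Cauchy-type estimates: there exist constants $C, R > 0$ (depending on $K$ and a slightly larger neighborhood) with $|\partial^\alpha f(\underline{x})| \le C\, \alpha!\, R^{-|\alpha|}$ for all multi-indices $\alpha$ and all $\underline{x} \in K$. Because $\underline{D} = \sum_{j=1}^n e_j \partial_{e_j}$ is a first-order constant-coefficient operator, $\underline{D}^k f$ is a finite $\C_n$-linear combination of the partials $\partial^\alpha f$ with $|\alpha| = k$; counting terms and using $|e_A| = 1$ in $\C_n$, one gets a bound $|\underline{D}^k f(\underline{x})| \le C\, k!\, (n/R)^{k}$ on $K$, uniformly. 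Hence the series $\sum_k \frac{|x_0|^k}{k!}|\underline{D}^k f(\underline{x})| \le C \sum_k (|x_0|\, n/R)^k$ converges uniformly for $|x_0| < R/n$ and $\underline{x} \in K$; this gives a genuine real-analytic $\C_n$-valued function on a neighborhood of $\{0\}\times K$, and standard analytic-continuation/connectedness arguments extend it to all of $\R^{n+1}$ (or one simply notes the construction is local and the radius $R/n$ can be taken uniform on compacta). The evaluation at $x_0 = 0$ kills every term except $k=0$, giving $F(0,\underline{x}) = f(\underline{x})$.

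Next I would check monogenicity. Because the series converges uniformly on compacta together with all its term-by-term derivatives (the same Cauchy estimates control $\underline{D}^{k+1} f$), we may differentiate term by term. Compute
\begin{align*}
(\partial_{e_0} + \underline{D})F(x_0,\underline{x})
&= \sum_{k=1}^\infty (-1)^k \frac{x_0^{k-1}}{(k-1)!}\underline{D}^k f(\underline{x}) + \sum_{k=0}^\infty (-1)^k \frac{x_0^k}{k!}\underline{D}^{k+1} f(\underline{x})\\
&= \sum_{j=0}^\infty (-1)^{j+1}\frac{x_0^{j}}{j!}\underline{D}^{j+1} f(\underline{x}) + \sum_{j=0}^\infty (-1)^j \frac{x_0^j}{j!}\underline{D}^{j+1} f(\underline{x}) = 0,
\end{align*}
after reindexing $j = k-1$ in the first sum; the two series cancel termwise. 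Thus $F$ is monogenic on $\R^{n+1}$, and by the uniqueness in Theorem \ref{monogenicexist} it coincides with the C-K extension of $f$.

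The main obstacle is the convergence/analyticity bookkeeping in the first step: one must pass from analyticity of $f$ (a hypothesis about convergence of its own Taylor series) to uniform control of $|\underline{D}^k f|$ on compacta with the right $k!$-type growth, and then verify that term-by-term differentiation is legitimate. Everything after that — the restriction property and the telescoping cancellation proving monogenicity — is routine. I would also remark that $e^{-x_0\underline{D}}$ is merely suggestive notation for the operator series and does not require any operator-theoretic functional calculus; the content is entirely in the explicit power series above.
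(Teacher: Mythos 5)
The paper does not prove this theorem at all: it is quoted from Sommen \cite{sf} and Delanghe--Sommen--Soucek \cite{dss}, so there is no internal argument to compare against. Your direct verification (Cauchy estimates for the real-analytic $f$ to control $|\underline{D}^k f|$, restriction at $x_0=0$, and the telescoping cancellation $\partial_{e_0}F=-\underline{D}F$ after term-by-term differentiation) is exactly the standard proof from those references, and the three steps you carry out are all correct as written.

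The one place where your argument does not close is the globalization at the end of the convergence step. The Cauchy estimates give you a bound $|\partial^\alpha f|\le C\,\alpha!\,R^{-|\alpha|}$ only with an $R$ determined by how far $f$ extends holomorphically off $\R^n$, so the series converges only on the strip $|x_0|<R/n$; neither ``analytic continuation/connectedness'' nor ``the radius can be taken uniform on compacta'' upgrades this to all of $\R^{n+1}$. Indeed, for general real-analytic $f$ the global claim is false: for $n=1$ monogenicity reduces to the Cauchy--Riemann equations, and $f(x)=(1+x^2)^{-1}$ is real-analytic on $\R$ but admits no monogenic extension to all of $\R^2$. The honest conclusion of your argument is the local statement (existence and the series formula on a neighborhood of $\{0\}\times\R^n$), which is how \cite{dss} states the C-K extension; the global version requires $f$ to extend to an entire function on $\C^n$, in which case $R$ can be taken arbitrarily large and your estimate does give uniform convergence on every compact subset of $\R^{n+1}$. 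This is really an imprecision in the paper's own statements of Theorems \ref{monogenicexist} and \ref{2.3} rather than a defect peculiar to your proof --- and it is harmless for the paper, since every function it extends ($\rho\ast f$ for $f\in L^2(d\rho)$, and the monomials $\underline{x}^\beta$) is entire --- but you should either add the entirety hypothesis or weaken the conclusion to a neighborhood of $x_0=0$ rather than appeal to a continuation argument that is not available.
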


\section{Clifford algebra-valued Segal-Bargmann Transform}

We introduce the Hilbert space of Clifford algebra-valued square-integrable functions with respect to measure $\rho$ on $\mathbb{R}^{n}$ 
\[
L^2(\R^{n}, d\rho\,;\,\mathbb{C}_n) =\{f : \R^{n} \rightarrow \mathbb{C}_n ~|~ \int_{\mathbb{R}^{n}}  |f(\underline{x})|^2 \,\rho(x)\,dx < \infty \},
\] 
equipped with the inner product:	
$$ \langle f,g \rangle = \int_{\mathbb{R}^{n}} \left(f(\underline{x}),g(\underline{x})\right) \rho(x)\,dx =\int_{\mathbb{R}^{n}} [\,\conj{f(\underline{x})}g(\underline{x})\,]_0\, \rho(x)\,dx$$
where
\[ \rho (\underline{x}) = (2\pi)^{-n/2}\, e^{-|\underline{x}|^2/2}.\]

We identify $L^2(\R^{n}, d\rho\,;\,\mathbb{C}_n)$ with the tensor product $L^2(\R^{n}, d\rho) \otimes \mathbb{C}_n$.
Also, the Hilbert space of Clifford algebra-valued square-integrable functions with respect to measure $\tilde{\mu}$ on $\mathbb{R}^{n+1}$ is given by
\[
L^2(\R^{n+1}, d\tilde{\mu}\,;\,\mathbb{C}_n) =\{F : \R^{n+1} \rightarrow \mathbb{C}_n ~|~ \int_{\mathbb{R}^{n+1}}  |F(x)|^2 \,d\tilde{\mu} < \infty \},
\] 
equipped with the inner product:	
$$ \langle F,G \rangle = \int_{\mathbb{R}^{n+1}} \left(F(x),G(x)\right) d\tilde{\mu} =\int_{\mathbb{R}^{n+1}} [\,\conj{F(x)}G(x)\,]_0\, d\tilde{\mu}$$
where
$$ d\tilde{\mu} = \pi^{-(n+1)/2}e^{-x_0^2-|\underline{x}|^2}\,dx_0\,d\underline{x}.$$

The space $L^2(\R^{n+1}, d\tilde{\mu}\,;\,\mathbb{C}_n)$ can be identified with the tensor product
$ L^2(\R^{n+1}, d\tilde{\mu})\otimes\mathbb{C}_n$.
Denote by $\mathcal{M}L^2(\mathbb{R}^{n+1},d\tilde{\mu})$ the Hilbert space of monogenic functions on $\mathbb{R}^{n+1}$ that are square-integrable with respect to measure $\tilde{\mu}$.

Following the idea of the proof of Theorem \ref{vextend} in \cite{mnq}, we will prove Theorem \ref{main0}, namely,  $\tilde{U}$ is a unitary isomorphism from $L^2(\mathbb{R}^n,d\rho)\otimes \mathbb{C}_n$ onto  $\mathcal{M}L^2(\mathbb{R}^{n+1},d\tilde{\mu})$.
The map $\tilde{U}$ is the heat operator applied to an element in the domain and then followed by the C-K extension. 
In other words, 
\[
\tilde{U}(f) 
= \big(e^{-x_0 \underline{D}} \circ e^{\frac{\triangle}{2}}\big)(f) 
= e^{-x_0 \underline{D}} (\rho \ast f). 
\]
Note that $\rho$ is analytic on $\R^n$, and so is $\rho \ast f$. Then its C-K extension, $\tilde{U}(f)$, exists and is monogenic on $\mathbb{R}^{n+1}$.

\begin{proof}[Proof of Isometry]
Note that the Schwartz space of $\mathbb{C}_n$-valued functions is identified with the tensor product $\mathcal{S}(\mathbb{R}^n)\otimes \mathbb{C}_n$. Since $\mathcal{S}(\mathbb{R}^n)$ is dense in $L^2(\mathbb{R}^n,d\rho)$, it follows that  $\mathcal{S}(\mathbb{R}^n)\otimes \mathbb{C}_n$ is dense in $L^2(\mathbb{R}^n,d\rho)\otimes \mathbb{C}_n$.
Any $f\in \mathcal{S}(\mathbb{R}^n)\otimes \mathbb{C}_n$ can be written as $f = \sum_{A\subset N} f_A e_A$, where $f_A \in \mathcal{S}(\mathbb{R}^n)$. 
Hence the Fourier transform of $f$ is given by $\widehat{f} = \sum_{A\subset N} \widehat{f_A}\,e_A.$  
By the density argument, it suffices to show that $\tilde{U}$ is an isometry on $\mathcal{S}(\mathbb{R}^n)\otimes \mathbb{C}_n$. 

The Fourier inversion formula of $f \in \mathcal{S}(\mathbb{R}^n)\otimes \mathbb{C}_n$ is
\begin{equation}\label{Fourier_inversion}
f(x) = \frac{1}{(2\pi)^{n/2}}\int_{\mathbb{R}^n}e^{i(\underline{p},\underline{x})} \hat{f}(\underline{p})\,d\underline{p}.
\end{equation}
By applying the operator $e^{-x_0 \underline{D}} \circ e^{\frac{\triangle}{2}}$ to $f$ in (\ref{Fourier_inversion}) and pass it inside the integral sign, we see that
\begin{equation}\label{second}
\tilde{U}(f)(x_0,\underline{x}) \;
= \; \frac{1}{(2\pi)^{n/2}}\int_{\mathbb{R}^n}e^{-ix_0\underline{p}}\,e^{-\frac{|\underline{p}|^2}{2}}\,e^{i(\underline{p},\underline{x})}\, \hat{f}(\underline{p})\,d\underline{p}.
\end{equation}
Note that since $f$ and $\hat{f}$, as well as their derivatives, are rapidly decaying (i.e. they are functions in the Schwarz space), this allows passage of the operator inside the integral sign and the interchange of the order of integration.

To show the isometry of $\tilde{U}$, let $f, h\in \mathcal{S}(\mathbb{R}^n)\otimes \mathbb{C}_n$. Then
\begin{align*}
\langle f,h\rangle 
&=\frac{1}{(2\pi)^{n/2}} \int_{\mathbb{R}^{n}}   \big( f(\underline{x}),h(\underline{x}) \big) e^{-\frac{|\underline{x}|^2}{2}}d\underline{x}\\ 
&=\frac{1}{(2\pi)^{3n/2}} \int_{\mathbb{R}^{n}}\int_{ \mathbb{R}^{2n}} \big(e^{i(\underline{p},\underline{x})}  \hat{f}(\underline{p}),e^{i(\underline{q},\underline{x})} \hat{h}(\underline{q}) \big) e^{-\frac{|\underline{x}|^2}{2}}\,d\underline{p}\,d\underline{q}\,d\underline{x} \\
&=\frac{1}{(2\pi)^{3n/2}} \int_{\mathbb{R}^{2n}}\big( \hat{f}(\underline{p}),\hat{h}(\underline{q}) \big)\int_{ \mathbb{R}^n} e^{i(\underline{q}-\underline{p},\underline{x})}   e^{-\frac{|\underline{x}|^2}{2}}\,d\underline{x}\,d\underline{p}\,d\underline{q} \\
&= \frac{1}{(2\pi)^n} \int_{\mathbb{R}^{2n}} e^{-\frac{|\underline{q}-\underline{p}|^2}{2}} \big( \hat{f}(\underline{p}),\hat{h}(\underline{q}) \big) d\underline{p}\,d\underline{q}. 
\end{align*}
The last equality is obtained by the Fourier transform.
On the other hand,
\begin{equation}\label{Eq:inner product}
\big\langle \tilde{U}(f), \tilde{U}(h) \big\rangle 
\; =  \; \frac{1}{\pi^{\frac{n+1}{2}}}\int_{\mathbb{R}^{n+1}}\big(\tilde{U}(f), \tilde{U}(h)\big) e^{-x^2_0}e^{-|\underline{x}|^2}\,dx_0\,d\underline{x}
\end{equation}
Note that $\big(e^{-ix_0\underline{p}}\hat{f}(\underline{p}),e^{-ix_0\underline{q}}\hat{h}(\underline{q})\big) = e^{-ix_0(\underline{p}+\underline{q})}\big(\hat{f}(\underline{p}),\hat{h}(\underline{q})\big)$ because $\underline{p}$ is a 1-vector in $\mathbb{R}_n$, which implies $\conj{i\underline{p}} = i\underline{p}$.  
It follows from (\ref{second}) that 
\begin{equation*}\label{third}
\big(\tilde{U}(f), \tilde{U}(h)\big)  = \frac{1}{(2\pi)^n}\int_{\mathbb{R}^{2n}}
e^{i(\underline{q}-\underline{p},\underline{x})}e^{-\frac{|\underline{p}|^2+|\underline{q}|^2}{2}} e^{-ix_0(\underline{p}+\underline{q})}\big(\hat{f}(\underline{p}),\hat{h}(\underline{q})\big) \,d\underline{p}\,d\underline{q}.
\end{equation*}
Substitute 
this into (\ref{Eq:inner product}) and interchange the order of integration by Fubini's theorem. Let us calculate the integral with respect to $d\underline{x}$ and $dx_0$ first. The  Fourier transform yields the following integrals
\begin{align}
\int_{\R^n} e^{i(\underline{q}-\underline{p},\underline{x})}e^{-|\underline{x}|^2}\,d\underline{x}  \; &= \; \pi^{n/2}e^{-\frac{|\underline{q}-\underline{p}|^2}{4}}. \label{first} \\  
\int_{\mathbb{R}} e^{-ix_0(\underline{p}+\underline{q})}e^{-x_0^2}\,dx_0 &=
\sqrt{\pi} e^{\frac{|\underline{p}+\underline{q}|^2}{4}} \label{EQ:integrate wrt x_0}
\end{align}
Putting (\ref{first}) and (\ref{EQ:integrate wrt x_0}) in (\ref{Eq:inner product}) and applying the Parallelogram Law, we have 
\begin{align*}
\big\langle \tilde{U}(f),\tilde{U}(h) \big\rangle 
	&= \frac{1}{(2\pi)^n}\int_{\mathbb{R}^{2n}} e^{\frac{|\underline{p}+\underline{q}|^2-|\underline{q}-\underline{p}|^2}{4}} e^{-\frac{|\underline{p}|^2+|\underline{q}|^2}{2}}
	\big( \hat{f}(\underline{p}),\hat{h}(\underline{q})\big)\,d\underline{p}\,d\underline{q} \\
	&= \frac{1}{(2\pi)^n} \int_{\mathbb{R}^{2n}} e^{-\frac{|\underline{q}-\underline{p}|^2}{2}} \big( \hat{f}(\underline{p}),\hat{h}(\underline{q}) \big) d\underline{p}\,d\underline{q}.
\end{align*}
This establishes the isometry part of $\tilde{U}$.
\phantom{\qedhere}
\end{proof}
\begin{proof}[Proof of Surjectivity]
Let $\{H_{\beta} : \beta \in \mathbb{N}_0^n \}$ denote the orthogonal basis of $L^2(\mathbb{R}^n,d\rho)$ consisting of $n$-dimensional Hermite polynomials, each of which is a product of $1$-dimensional Hermite polynomials, i.e. for
\[
H_{\beta}(\underline{x})= H_{\beta_1}(x_1)\dots H_{\beta_n}(x_n)
\]  
for  $\beta = (\beta_1,\dots,\beta_n) \in \mathbb{N}_0^n.$
Morever, $\|H_{\beta}\|^2 = \beta! = \beta_1!\dots\beta_n!$.
Details about Hermite polynomials can be found in standard literatures, e.g. \cite{dbgs}.
It can be directly computed that 
\begin{equation}\label{heathermite}
e^{\frac{\Delta}{2}}H_{\beta} = \underline{x}^\beta  = x_1^{\beta_1}\dots x_n^{\beta_n}.
\end{equation} 
	
Let $G \in \mathcal{M}L^2(\mathbb{R}^{n+1},d\tilde{\mu})$. Then $g(\underline{x}) =  G(0,\underline{x})$ is an analytic function  and hence it has a Taylor expansion with infinite radius of convergence
\[
g(\underline{x}) \; = \; \sum_A\sum_{\beta\in\mathbb{N}_0^n} \alpha_{\beta,A} \,\underline{x}^\beta\, e_A.
\]
Take  
\[
f = \sum_A\sum_{\beta\in\mathbb{N}_0^n} \alpha_{\beta,A} \,H_\beta\, e_A.
\]	where the series is taken in the $L^2$-norm sense. Then
	$f \in L^2(\mathbb{R}^n,d\rho)\otimes\mathbb{C}_n$.	
Since $\tilde{U}$ is bounded, we can pass it inside the summations: 
\[
\tilde{U}(f) = \sum_A\sum_{\beta\in\mathbb{N}_0^n} \alpha_{\beta,A} \,\tilde{U}(H_\beta)\, e_A.
\]
Since $\tilde{U}(f)$ is monogenic, we can evaluate its value at $x_0 = 0$. 
\begin{align*}
	\tilde{U}(f)(0,\underline{x}) 
	= \sum_A\sum_{\beta\in\mathbb{N}_0^n} \alpha_{\beta,A} \,e^{\frac{\triangle}{2}}(H_\beta)\, e_A 
	= \sum_A\sum_{\beta\in\mathbb{N}_0^n} \alpha_{\beta,A} \,\underline{x}^\beta\, e_A.
\end{align*} 
Hence $\tilde{U}(f)(0,\underline{x}) = g(\underline{x})$, which implies $\tilde{U}(f) = G.$
We have established that $\tilde{U}$ is a unitary map from $L^2(\mathbb{R}^n,d\rho)\otimes\mathbb{C}_n$ onto $\mathcal{M}L^2(\mathbb{R}^{n+1}, d\tilde{\mu})$.
\end{proof}

Since $\tilde{U}$ is a unitary map, it follows that $\{\tilde{U}(H_\beta) :  \beta \in \mathbb{N}_0^n\}$ is an orthogonal basis for $\mathcal{M}L^2(\mathbb{R}^{n+1}, d\tilde{\mu})$. Define
\begin{equation} \label{Def of P_beta}
P_\beta(x_0,\underline{x}) = \tilde{U}(H_\beta)(x_0,\underline{x}) = e^{-x_0\underline{D}}\,\underline{x}^\beta. 
\end{equation}

Moreover, $\|P_\beta\|^2 = \|\tilde{U}(H_\beta)\|^2 = \|H_\beta\|^2 = \beta!$.
We put it into the following Corollary.
	
\begin{cor}\label{onb}
$\{P_\beta : \beta \in \mathbb{N}_0^n\}$ is an orthogonal basis for $\mathcal{M}L^2(\mathbb{R}^{n+1}, d\tilde{\mu})$ and $\|P_\beta\|^2 = \beta!$ for each $\beta \in \mathbb{N}_0^n$.
\end{cor}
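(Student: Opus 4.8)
The plan is to obtain the corollary as an immediate consequence of Theorem~\ref{main0}: a unitary operator carries an orthogonal basis onto an orthogonal basis and preserves norms, so it suffices to transport along $\tilde{U}$ the well-known facts about Hermite polynomials on the domain side. On $L^2(\mathbb{R}^n, d\rho)$ the probabilists' Hermite polynomials $\{H_\beta : \beta \in \mathbb{N}_0^n\}$ form an orthogonal basis with $\|H_\beta\|^2 = \beta!$; tensoring with $\mathbb{C}_n$ and using that $\tilde{U}$ is right $\mathbb{C}_n$-linear (clear from its integral kernel, where $f(\underline{y})$ sits to the right of the kernel), the images $P_\beta = \tilde{U}(H_\beta)$ will inherit exactly these properties in $\mathcal{M}L^2(\mathbb{R}^{n+1}, d\tilde{\mu})$.

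Concretely, I would proceed in three short steps. First, record the explicit shape of $P_\beta$: since $\tilde{U} = e^{-x_0\underline{D}} \circ e^{\frac{\Delta}{2}}$ and $e^{\frac{\Delta}{2}}H_\beta = \underline{x}^\beta$ by \eqref{heathermite}, one gets $P_\beta = e^{-x_0\underline{D}}\,\underline{x}^\beta$, which is \eqref{Def of P_beta}. Second, deduce orthogonality and the norm identity from the isometry half of Theorem~\ref{main0}: for $\beta \neq \beta'$ one has $\langle P_\beta, P_{\beta'}\rangle = \langle \tilde{U}(H_\beta), \tilde{U}(H_{\beta'})\rangle = \langle H_\beta, H_{\beta'}\rangle = 0$, and likewise $\|P_\beta\|^2 = \|H_\beta\|^2 = \beta!$, where the orthogonality and the value $\beta!$ of $\|H_\beta\|^2$ come from the classical one-variable computation $\int_{\mathbb{R}} H_k(t)H_\ell(t)\,(2\pi)^{-1/2}e^{-t^2/2}\,dt = \delta_{k\ell}\,k!$ taken as a product over the $n$ coordinates. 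Third, establish completeness: by surjectivity of $\tilde{U}$ (the second half of Theorem~\ref{main0}) the closed right $\mathbb{C}_n$-submodule generated by $\{P_\beta\}$ is the $\tilde{U}$-image of the closed right $\mathbb{C}_n$-submodule generated by $\{H_\beta\}$ in $L^2(\mathbb{R}^n, d\rho)\otimes\mathbb{C}_n$, and the latter is the whole space because the Hermite polynomials are dense in $L^2(\mathbb{R}^n, d\rho)$.

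Since everything reduces to a known fact pulled back through a unitary map, there is no analytic difficulty here; the only point that needs a word of care is the bookkeeping of the right $\mathbb{C}_n$-module structure, namely interpreting ``orthogonal basis'' in the module sense — a family of mutually orthogonal nonzero elements whose right $\mathbb{C}_n$-linear span is dense — or, equivalently, restating the conclusion with the genuine Hilbert-space orthogonal basis $\{P_\beta\, e_A : \beta \in \mathbb{N}_0^n,\ A \subset N\}$, which is the image under $\tilde{U}$ of the orthogonal basis $\{H_\beta\, e_A\}$ of the domain.
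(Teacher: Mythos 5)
Your proposal is correct and follows essentially the same route as the paper: the corollary is obtained by transporting the orthogonal Hermite basis $\{H_\beta\}$ of $L^2(\mathbb{R}^n, d\rho)$, with $\|H_\beta\|^2 = \beta!$, through the unitary map $\tilde{U}$ of Theorem~\ref{main0}, using \eqref{heathermite} to identify $P_\beta = e^{-x_0\underline{D}}\,\underline{x}^\beta$. Your additional remarks on the right $\mathbb{C}_n$-module bookkeeping are a sensible clarification but do not change the argument.
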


\section{Clifford algebra-valued Fock space}

Let $X=(\mathbb{R}^n)^*$, the real dual space of $\R^n$. Denote by $X^{\odot k}$ the algebraic symmetric $k$-tensor product of $X$. We will write $\text{Sym}(X)$ for the algebraic symmetric tensor algebra over $X$, i.e.\ $\text{Sym}(X)$ is the weak direct sum $\sum_{k=0}^\infty X^{\odot k}$ consisting of elements of the form $\sum_{k=0}^{\infty} \alpha_k$, where each $\alpha_{k} \in {X}^{\odot k}$ and $\alpha_k = 0$ for all but finitely many $k$. Each $\alpha_{k} \in {X}^{\odot k}$ has a natural norm given by
$$|\alpha_k|^2 = \sum_{\substack{0\leq \beta_1,\dots, \beta_n \leq k\\ \beta_1 + \dots + \beta_n = k}} \frac{1}{\beta_1!\dots\beta_n!}|\alpha_k(e_1^{\odot\beta_1} {\odot} \dots {\odot}e_n^{\odot\beta_n})|^2  = \sum_{\substack{|\beta| = k\\	\beta \in \mathbb{N}_0^n}} \frac{1}{\beta!}|\alpha_k (e^{\beta})|^2,$$
where  $\{ e_1, \dots, e_n\}$ is the standard basis for $\R^n$. We define $\mathcal{F}(X)$ to be the Hilbert space completion of $\text{Sym}(X)$ with respect to the norm
\[
\|\alpha\| = \big(\sum_{k=0}^{\infty}  |\alpha_k|^2\big)^{\frac{1}{2}}
\]
and call it the \textbf{covariant Fock space}.
		
We can repeat the construction above for the Clifford algebra-valued symmetric tensor algebra, which will be identified with $\mathcal{F}(X)\otimes \mathbb{C}_n$ and called the \textbf{$\mathbb{C}_n$-valued covariant Fock space}. An element in $\mathcal{F}(X)\otimes \mathbb{C}_n$ is a strong sum $\sum_{k=0}^{\infty} \alpha_k$, where each $\alpha_{k} \in {X}^{\odot k} \otimes \mathbb{C}_n$ and such that 
		\[
		  \|\alpha\|^2 = \sum_{k=0}^{\infty}  |\alpha_k|^2 < \infty.
		\]	

\begin{dfn}
Let $f: \mathbb{R}^{n+1} \rightarrow \mathbb{C}_n$ be a monogenic function. Let $\{ e_1, \dots, e_n\}$ be the standard basis of $\R^n$.  For each $(x_0,\underline{x}) \in \mathbb{R}^{n+1}$, define the directional derivative operator $Df(x_0, \underline{x})$ to be a linear map on $\R^n$ such that $Df(x_0,\underline{x})(e_i) = \partial_{e_i}f(x_0,\underline{x})$ for each $i$. More generally, for each $k \in \mathbb{N}$, define a $k$-linear map $D^k f(x_0,\underline{x})$ on $(\R^n)^k$ by
		\[D^kf(x_0,\underline{x})(e_{i_1},\dots,e_{i_k}) =\partial_{e_{i_1}}\dots \partial_{e_{i_k}}f(x_0, \underline{x}).\]
\end{dfn}
Since $f$ is smooth, $D^kf(x_0,\underline{x})$  is a symmetric $k$-linear map on $(\R^n)^k$.
This induces a $\mathbb{C}_n$-valued linear map on $(\R^n)^{\odot k}$ which is also denoted by $D^kf(x_0,\underline{x})$. Thus for any $u_1,\dots,u_l \in \R^n$,
\[ D^kf(x_0,\underline{x}) (u_1\odot\dots\odot u_l ) =\begin{cases}
\partial_{u_1}\dots \partial_{u_k}f(x_0, \underline{x}) \quad &\text{if } l=k; \\
0 &\text{otherwise.}
\end{cases} \]
Note that, for each $k\in \N_0$, the norm of $D^kf(x_0,\underline{x})$ is
\begin{align*}
		|D^kf(x_0,\underline{x})|^2 
		&=  \sum_{\substack{|\beta| = k\\	\beta \in \mathbb{N}_0^n}} \frac{1}{\beta!}
		|\partial^{\beta}f(x_0,\underline{x})|^2,
\end{align*}
where $\partial^{\beta} = \partial_1^{\beta_1}\dots\partial_n^{\beta_n}$ for $\beta = (\beta_1,\dots,\beta_n)$.
Next, we identify $D^kf(x_0,\underline{x})$ as an element of $(X)^{\odot k}\otimes \mathbb{C}_n$. With $D^0f(x_0,\underline{x})$ defined as $f(x_0,\underline{x})$, it is natural to write
\[ (1-D)_{(x_0,\underline{x})}^{-1} f = \sum_{k=0}^{\infty} D^kf(x_0,\underline{x}).\]
Then $(1-D)_{(x_0,\underline{x})}^{-1} f \in \mathcal{F}(X) \otimes \mathbb{C}_n$ if
\[ \|(1-D)_{(x_0,\underline{x})}^{-1} f\|^2 :=  \sum_{k=0}^{\infty}  |D^kf(x_0,\underline{x})|^2 < \infty.\]
		
For simplicity, we write $(1-D)^{-1}$ instead of $(1-D)_{(0,\underline{0})}^{-1}$. We will prove Theorem \ref{main02}, which states that the map $(1-D)^{-1}$ is a unitary isomorphism from $\mathcal{M}L^2(\mathbb{R}^{n+1},\,d\tilde{\mu})$ onto $\mathcal{F}(X)\otimes \mathbb{C}_n $.

\begin{proof}[Proof of Theorem \ref{main02}] 
First, we show that $(1-D)^{-1}$ is an isometry. Let $ F \in \mathcal{M}L^2(\mathbb{R}^{n+1},\,d\tilde{\mu})$. By Corollary \ref{onb}, the orthogonality of $\{P_\beta\}$ implies 
\[ 
F = \sum_{\beta \in \mathbb{N}_0^n}\omega_{\beta}P_{\beta} \quad \text{and } \quad \|F\|^2 = \sum_{\beta \in \mathbb{N}_0^n} \beta! |\omega_{\beta}|^2
\]
where $\omega_\beta \in \mathbb{C}_n$ for each $\beta$ and the first sum converges in $L^2(\mathbb{R}^{n+1},\,d\tilde{\mu}) \otimes \mathbb{C}_n$ sense and also converges uniformly on compact sets by monogenicity of $F$. 
Since $P_\beta(x_0,\underline{x}) = e^{-x_0\underline{D}}\,\underline{x}^\beta$ and any partial differential operator commutes with $\underline{D}$ and hence with $e^{-x_0\underline{D}}$, it follows that
$\partial^{\alpha}P_{\beta}(0,\underline{0}) = \beta!\delta_{\alpha\beta}$. Thus 	
\[
|D^kF(0,\underline{0})|^2 = \sum_{|\beta| = k} \frac{1}{\beta!} |\partial^{\beta}F(0,\underline{0})|^2 = \sum_{|\beta| = k} \beta!\, |\omega_{\beta}|^2
\]	
Hence
\[
\|(1-D)^{-1}F\|^2 
= \sum_{k=0}^{\infty} |D^kF(0,\underline{0})|^2
= \sum_{\beta \in \mathbb{N}_0^n} \beta! |\omega_{\beta}|^2 
= \|F\|^2.
\]
This establishes the isometry of $(1-D)^{-1}$.
Next, we show that $(1-D)^{-1}$ is surjective. Let $\alpha \in \mathcal{F}(X)\otimes \mathbb{C}_n.$ Then $\alpha = \sum_{k=0}^{\infty} \alpha_k$ where $\alpha_k \in X^{\odot k}\otimes \mathbb{C}_n$ and $\|\alpha\|^2 =\sum_{k=0}^{\infty}  |\alpha_k|^2 < \infty.$ For each  $\underline{x} \in \R^n$, define $\exp_k(\underline{x}) \in (\R^n)^{\odot k}$ by
\begin{align*}
\exp_k(\underline{x}) \;
= \; \sum_{|\beta| = k} \frac{1}{\beta!}\, (x_1 e_1)^{\odot \beta_1}{\odot} \dots{\odot} (x_n e_n)^{\odot \beta_n} \;
= \; \sum_{|\beta| = k} \frac{\underline{x}^{\beta}}{\beta!}\, e^\beta.
\end{align*}
where $e^\beta = e_1^{\odot \beta_1}{\odot} \dots{\odot}  e_n^{\odot \beta_n}$. For each $k \in \N_0$, let
$f_k(\underline{x})= \alpha_k(\exp_k(\underline{x}))$ and $f(\underline{x}) = \sum_{k=0}^{\infty}	f_k(\underline{x})$.		
Then each $f_k$ is analytic, which implies $f(\underline{x})$ is analytic. Define $F(x_0,\underline{x}) = e^{-x_0\underline{D}}f(\underline{x})$ to be the C-K extension of $f$. By (\ref{Def of P_beta}), we have 
\[
F(x_0,\underline{x}) = \sum_{k=0}^{\infty} \sum_{|\beta| = k} \frac{1}{\beta!}  \alpha_k(e^\beta)\,P_\beta.
\]
It follows from Corollary \ref{onb} that
\begin{align*}
	\|F\|^2 = \sum_{k=0}^{\infty}\sum_{|\beta| = k}  \frac{1}{\beta!} |\alpha_k (e^{\beta})|^2 = \sum_{k=0}^{\infty}|\alpha_k|^2 = \|\alpha\|^2 < \infty.
\end{align*}
To show that $ (1-D)^{-1} F = \alpha$, let $m \in \mathbb{N}_0$. For $e^{\gamma} = e_1^{{\odot\gamma_1}}{\odot} e_2^{{\odot\gamma_2}}{\odot}\dots {\odot} e_n^{{\odot\gamma_n}}$ with $|\gamma| = m$, we have 
\begin{align*}
[D^{m}F(0,\underline{0})](e^{\gamma}) \;
&=\; \partial_{e_1}^{\gamma_1}\dots\partial_{e_n}^{\gamma_n}\Big[e^{-x_0\underline{D}}\Big(\sum_{k=0}^{\infty} \alpha_k(\exp_k(\underline{x}))\Big)\Big](0,\underline{0}) \\
&= \; \partial_{e_1}^{\gamma_1}\dots\partial_{e_n}^{\gamma_n}\Big[\sum_{k=0}^{\infty} \alpha_k\big(\exp_k(\underline{x})\big)\Big](\underline{0})\\
&= \; \sum_{k=0}^{\infty} \Big[\partial_{e_1}^{\gamma_1}\dots\partial_{e_n}^{\gamma_n}\alpha_k\big(\exp_k(\underline{x})\big)\Big](\underline{0}).
\end{align*}
We can differentiate term-by-term inside the power series because $F(0,\underline{x})$ is analytic. Since
\,$\partial_{e_1}^{\gamma_1}\dots\partial_{e_n}^{\gamma_n}(\underline{x}^{\beta})  = {\beta}!\,\delta_{\beta\gamma}$,
evaluating at $\underline{x} = \underline{0}$ gives		 
\[
\partial_{e_1}^{\gamma_1}\dots\partial_{e_n}^{\gamma_n}\big(\exp_k(\underline{x})\big) 
=
\begin{cases}
e_1^{{\odot\gamma_1}}{\odot} e_2^{{\odot\gamma_2}}{\odot}\dots {\odot} e_n^{{\odot\gamma_n}}, &\text{ for $k = m$} ; \\
0, & \text{ for $k \neq m$.}
\end{cases} 
\] 
It follows that $D^{m}F(0,\underline{0})(e^{\gamma}) = \alpha_{m}(e^{\gamma})$ for any $\gamma \in \mathbb{N}^n_0$ with $|\gamma| = m$. Since  $\{e^\gamma : \gamma \in \mathbb{N}^n_0 , |\gamma| = m\}$ forms an orthogonal basis for $(\R^n)^{\odot m}$, we conclude that $D^{m} F(0,\underline{0}) = \alpha_{m}$ and that
\[
(1-D)^{-1}F = \sum_{m=0}^\infty D^m F(0,\underline{0}) = \sum_{m=0}^\infty \alpha_m =\alpha. \qedhere 
\]

\end{proof}

\subsection*{Acknowledgments}
We would like to thank Brian Hall for his valuable suggestions and also to the anonymous reviewers for helpful comments. This work was partially supported by the Development and Promotion of Science and Technology Talents Project.

\subsection*{Declarations}

This publication is supported by multiple datasets, which are available at locations cited in the reference section.

	\label{References}
	
\end{document}